\setlist[1]{itemsep=2pt}
\newtheorem{theorem}{Theorem}[section]
\newtheorem{lemma}[theorem]{Lemma}
\theoremstyle{remark} 
\newtheorem{remark}[theorem]{Remark}
\newtheorem{example}[theorem]{Example}
\theoremstyle{plain}
\crefname{theoremintro}{Theorem}{Theorems}				
\crefname{corollaryintro}{Corollary}{Corollary}			
\title[]{On the tightness of left-invariant contact structures}
\author{Eugenio Bellini$^1$}
\address{$^1$Dipartimento di Matematica Tullio Levi-Civita, Università degli Studi di Padova, Padova, Italy.\newline email: \textbf{\emph{eugenio.bellini@unipd.it}}}
\begin{document}

\maketitle
\begin{abstract}
	We prove that all left-invariant contact structures on three-dimensional Lie groups are tight. The argument is based on Riemannian methods and establishes a unique factorization property for any Lie group admitting a left-invariant contact structure, other than $\mathrm{SU}(2)$. We then make use of such factorization property to construct embeddings of left-invariant contact structures into the standard contact structure on $\mathbb R^3$.
\end{abstract}
\medskip
\noindent\textbf{Mathematics Subject Classification (2020):}
53D, 22E, 53C.
\tableofcontents
\section{Statement of the results} 
A \emph{3-dimensional contact manifold} \((M,\xi)\) is a smooth \(3\)-manifold \(M\) equipped with a contact structure \(\xi\), that is, a completely non-integrable plane field \cite{Geiges,Massot-LectureNotes}. A contact manifold is said to be \emph{overtwisted} if it contains an embedded disk whose boundary is tangent to \(\xi\); otherwise, it is called \emph{tight}. This tight/overtwisted dichotomy, originating in the seminal work of Bennequin \cite{bennequin}, represents a turning point in the development of contact topology. While overtwisted contact structures on closed \(3\)-manifolds have been completely classified and tight contact structures have been the subject of extensive study (see for instance \cite{Eliashberg,Elia3,Eliashberg2,Elia4,giroux1,giroux2,HondaClass,VGK2003,SmallSeifert}), determining whether a specific contact structure is tight or overtwisted remains, in general, a non-trivial problem.  

In this work, we address the tightness question in the setting of \emph{left-invariant contact structures}. Recall that a contact structure \(\xi\) on a Lie group \(G\) is called \emph{left-invariant} if it is preserved under left translations, i.e.\ for every \(g \in G\) we have \((L_g)_*\xi = \xi\). Left-invariant contact structures are of independent interest in different areas of mathematics. For example, they appear in contact topology \cite{Geiges1997,Geiges1995}, in higher-dimensional contact and Riemannian geometry \cite{Diatta1,Diatta2}, and in sub-Riemannian geometry \cite{Barilari,Falbel,Boarotto2016}. They also play a role in K-contact geometry and comparison theorems \cite[Sec. 6.3]{ABBR}, as well as in models of the visual cortex \cite{Citti,Cortical_BCGPR}. A classification of three-dimensional Lie groups admitting a left-invariant contact structure can be deduced from the classification of three-dimensional Lie algebras (see, for instance \cite{Barilari}). In particular, there exist infinitely many pairwise non-isomorphic simply connected Lie groups carrying a left-invariant contact structure. 

Our main goal is to determine whether such left-invariant contact structures are tight or overtwisted. We obtain the following result. 

\begin{theorem}\label{thm:left-invariant-tight}
	All left-invariant contact structures on three-dimensional Lie groups are tight.
\end{theorem}

The question of the tightness of left-invariant structures first emerged for the author in the study of contact metric three manifolds. Specifically, the motivation was to investigate whether left-invariant structures could serve as model spaces for Riemannian and sub-Riemannian comparison theories aimed at establishing quantitative tightness results, see \cite{Massot,QDarboux,ABBR}.
As far as the author is aware, no statement concerning the tightness of left-invariant contact structures can be found in the literature.

\subsection{Outline of the proof}

We briefly describe the strategy of the proof in the simply connected case; the non-simply connected case follows because the property of being overtwisted lifts to the universal cover (see, for instance, \cite[Remark\, 2.6]{ABBR}).

Let \((G,\xi)\) be a simply connected Lie group endowed with a left-invariant contact structure. We distinguish two cases according to the Lie algebra \(\mathfrak g\) of \(G\).

\smallskip
\noindent
\textbf{Case 1: \(\mathfrak g \simeq \mathfrak{su}(2)\).}  
Lie's theorem implies that \(G\) is isomorphic to \(\mathrm{SU}(2)\). Moreover, all left-invariant plane fields on \(\mathrm{SU}(2)\) are related by a Lie group isomorphism, and are well known to be tight contact structures (see, for instance, \cite{Massot-LectureNotes}).

\smallskip
\noindent
\textbf{Case 2: \(\mathfrak g \not\simeq \mathfrak{su}(2)\).}  
In this case, the structure of \(G\) is described by the following result.

\begin{theorem}\label{thm:group_v_field_intro}
	Let \((G,\xi)\) be a three-dimensional simply connected Lie group with Lie algebra \(\mathfrak g \not\simeq \mathfrak{su}(2)\), endowed with a left-invariant contact structure $\xi$. Then there exist three \(1\)-dimensional Lie subgroups \(H_1,H_2,H_3\) of $G$, with \(T H_3 \subset \xi\), such that the map
	\[
	p: H_1 \times H_2 \times H_3 \longrightarrow G, \qquad p(h_1,h_2,h_3)=h_1 h_2 h_3
	\]
	is a diffeomorphism.
\end{theorem}
\noindent
Choose generators \(v_i \in \mathfrak{g}\) so that \(T_{1_G}H_i = \mathrm{span}\{v_i\}\) for \(i=1,2,3\). Theorem~\ref{thm:group_v_field_intro} implies that the map
\[
p:\mathbb{R}^3 \longrightarrow G,\qquad 
p(x,y,z)=\exp(xv_1)\exp(yv_2)\exp(zv_3),
\]
where $\exp:\mathfrak g\to G$ is the Lie group exponential map, is a diffeomorphism. In particular, \(G\) is diffeomorphic to \(\mathbb{R}^3\) and admits global coordinates \((x,y,z)\) such that the vector field \(\partial_z\) is tangent to $\xi$, since $v_3\in \xi$.
Applying the following lemma, proved in \cref{sec:proof_of_main}, immediately yields tightness of \(\xi\).

\begin{lemma}
	Let \(\xi\) be a contact structure on \(\mathbb{R}^3\). If there exist global coordinates \((x,y,z)\) such that the vector field \(\partial_z\) is tangent to $\xi$, then \(\xi\) is a tight contact structure.
\end{lemma}

It remains to justify Theorem~\ref{thm:group_v_field_intro}.  
For $G = \widetilde{\mathrm{SL}}(2)$, the factorization is provided explicitly.  
For all other groups with \(\mathfrak g \not\simeq \mathfrak{su}(2)\), we first construct a left-invariant Riemannian metric on $G$ together with a $2$-dimensional subgroup $H$ and a totally geodesic subgroup $H_3$, which is tangent to $\xi$ and orthogonal to $H$.  
We then apply the following lemma.

\begin{lemma}\label{lem:orthogonal_is_product}
	Let $G$ be a simply connected Lie group endowed with a left-invariant Riemannian metric. Assume the existence of an immersed codimension-one subgroup $H$ and a totally geodesic subgroup $H_3$, orthogonal to each other. Then the map
	\[
	p:H \times H_3 \longrightarrow G, \qquad p(h,h_3)=hh_3
	\]
	is a diffeomorphism.
\end{lemma}
\noindent 
Since $H$ is $2$-dimensional, the classification of $2$-dimensional Lie groups yields a further factorization \(H \simeq H_1 \times H_2 \) (cf. \cref{sec:proof_group_field}), where $\simeq$ denotes a diffeomorphism of smooth manifolds (not a Lie group isomorphism).

\section{Two lemmas on Riemannian Lie groups}\label{4sec:Hopf-Rinow}
	Given a Riemannian manifold $(M,\eta)$, we denote the normal bundle of an immersed submanifold $S\subset M$ as $TS^{\perp}$.
We say that $S$ is complete as a metric subspace of $M$ if $(S,d|_S)$ is a complete metric space, $d|_S$ being the restriction of the Riemannian distance. 
\begin{theorem}\label{4thm:Hopf-Rinow}
    Let $(M,\eta)$ be a Riemannian manifold and $S$ be an immersed submanifold which is complete as a metric subspace. The following are equivalent:
    \begin{itemize}
        \item[i)] $(M,\eta)$ is a complete Riemannian manifold,
        \item[ii)] The normal exponential map $\exp^{\perp}:TS^\perp\to M$, i.e., the restriction of the Riemannian exponential map to the normal bundle $TS^\perp$, is well-defined on the whole $TS^\perp$.
    \end{itemize}
\end{theorem}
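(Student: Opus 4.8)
The plan is to derive both implications from the classical Hopf--Rinow theorem, with essentially all of the content sitting in (ii)$\Rightarrow$(i). For (i)$\Rightarrow$(ii): if $(M,\eta)$ is complete then it is geodesically complete, so the full Riemannian exponential $\exp\colon TM\to M$ is everywhere defined; since $\exp^{\perp}$ is merely the restriction of $\exp$ to $TS^{\perp}\subset TM$, it too is everywhere defined, and $S$ plays no role in this direction.

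For (ii)$\Rightarrow$(i) I would argue by contradiction, assuming $M$ is not geodesically complete. Then some maximal unit-speed geodesic $\gamma\colon[0,L)\to M$ has $L<\infty$. Since $d(\gamma(s),\gamma(t))\le|s-t|$, the family $\gamma(t)$ is Cauchy as $t\to L$, and it can have no limit in $M$ (a limit would let local existence and uniqueness of geodesics extend $\gamma$ past $L$). The map $t\mapsto d(\gamma(t),S)$ is $1$-Lipschitz, hence tends to some $r_\ast\ge 0$. If $r_\ast=0$, I pick $q_t\in S$ with $d(\gamma(t),q_t)\to 0$; then $(q_t)$ is Cauchy in the outer metric, so completeness of $S$ gives $q_t\to q\in S$ and therefore $\gamma(t)\to q\in M$, a contradiction. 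This is the cleanest use of the hypothesis on $S$.

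To treat $r_\ast>0$ (indeed every case) uniformly I would realize the distance to $S$ by normal geodesics. For $t$ near $L$ let $q_t\in S$ be a nearest point, $d(\gamma(t),q_t)=d(\gamma(t),S)$, and let $\sigma_t$ be a minimizing geodesic from $q_t$ to $\gamma(t)$; by the first variation formula $\sigma_t$ leaves $S$ orthogonally, so $\sigma_t(s)=\exp^{\perp}(q_t,s\,u_t)$ for a unit normal $u_t\in T_{q_t}S^{\perp}$. The feet $q_t$ stay in a bounded subset of $S$, since $d(q_t,q_{t_0})\le d(\gamma(t),S)+|t-t_0|+d(\gamma(t_0),S)$ is bounded; extracting a subsequence $q_{t_n}\to q_\infty\in S$ and $u_{t_n}\to u_\infty$, the geodesics $\sigma_{t_n}$ converge to $s\mapsto\exp^{\perp}(q_\infty,s\,u_\infty)$, which by (ii) is defined at $s=r_\ast$. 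Continuity of $\exp^{\perp}$ then yields $\gamma(t_n)\to\exp^{\perp}(q_\infty,r_\ast u_\infty)\in M$, again contradicting the absence of a limit; hence $M$ is geodesically complete. Equivalently, running the construction forward shows every $p\in M$ is joined to $S$ by a minimizing normal geodesic, placing each closed ball $\overline B(q_0,R)$ inside a compact image of $\exp^{\perp}$ and giving the Heine--Borel property directly.

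I expect the main obstacle to be exactly the compactness inputs of the previous paragraph: that the nearest point $q_t$ and the minimizing geodesic $\sigma_t$ exist, and that the bounded family $(q_t)$ subconverges in $M$. For $S=\{q_0\}$ these are vacuous, which is why the classical theorem is stated for a point; for a genuine submanifold they reduce to showing that bounded subsets of the metric subspace $S$ are relatively compact in $M$. The honest route is to pass to the metric completion $\widehat M$: a subsequential limit of a bounded family in $S$, should it exist in $\widehat M$, is Cauchy in $M$ and hence, by completeness of $(S,d|_S)$, already lies in $S\subset M$. The delicate point is guaranteeing that such limits exist at all, i.e.\ the total boundedness of bounded subsets of $S$; making this step rigorous rather than tacitly assuming precompactness is the core of the proof, and it is precisely where the assumption that $S$ is complete in the outer metric (not merely closed, nor complete in its induced inner metric) cannot be dispensed with.
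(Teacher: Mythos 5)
Your direction (i)$\Rightarrow$(ii) is correct, and so is the case $r_\ast=0$ of the converse: there the outer completeness of $S$ is used exactly as it should be, and the extension argument via totally normal neighborhoods is standard. The genuine gap is the case $r_\ast>0$, and it is worse than the ``compactness bookkeeping'' you describe. You posit a nearest point $q_t\in S$ and, crucially, a \emph{minimizing geodesic} $\sigma_t$ from $q_t$ to $\gamma(t)$. But under your contradiction hypothesis $M$ is incomplete, and in an incomplete manifold neither object need exist: in $\mathbb{R}^2\setminus\{0\}$ the points $(-1,0)$ and $(1,0)$ are joined by \emph{no} geodesic at all, minimizing or otherwise. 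The existence of minimizing geodesics is one of the \emph{conclusions} of Hopf--Rinow, so assuming it inside a proof of Hopf--Rinow is essentially circular; the first-variation step and everything after it rests on an object you cannot produce. Separately, the subconvergence of the feet $(q_{t_n})$ requires bounded subsets of $S$ to be totally bounded, and here your closing diagnosis is also off: completeness (outer or otherwise) never implies total boundedness -- the two are independent -- so the hypothesis on $S$ cannot be ``precisely where'' this is supplied. Since you explicitly defer making that step rigorous and call it ``the core of the proof,'' what you have is an outline whose central step, in exactly the direction (ii)$\Rightarrow$(i) that the paper needs for \cref{4cor:groups}, is missing.

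For comparison, the paper proves the theorem by following do Carmo's proof of the classical Hopf--Rinow theorem, replacing the normal neighborhood of a point by a normal tubular neighborhood of $S$. The structural difference matters: in that argument one never \emph{assumes} a minimizing geodesic exists. One \emph{constructs} the normal geodesic realizing $d(\,\cdot\,,S)$ by shooting, via the globally defined $\exp^{\perp}$, from the point of the boundary of a small tube around $S$ that minimizes distance to the target, and then propagates the identity ``distance $=$ length'' along that geodesic by a closed-and-open argument; the only compactness input is compactness of small geodesic spheres and tube boundaries. That is precisely how the do Carmo route avoids your circularity, and it is the repair you should make: replace ``take a minimizing geodesic $\sigma_t$'' by the shooting construction. (To be fair, for non-compact $S$ the tube boundary is itself non-compact, so a localization step is still needed there too -- a point the paper's one-line proof also passes over in silence -- but in the paper's application $S$ is the zero section of $TH^{\perp}$, which is homogeneous and locally compact, where this can be handled.)
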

\begin{remark}
    Notice that for $S=\{p\}$ we recover the classical statement of Hopf-Rinow theorem.
\end{remark}
The proof of \cref{4thm:Hopf-Rinow} is analogous to the one of \cite[Thm.\ 2.8, Chap.\ 7]{DoCarmo}, replacing the normal neighborhood of a point with a normal tubular neighborhood around $S$. 
\begin{lemma}\label{4cor:groups}
    Let $G$ be a Lie group endowed with a left-invariant Riemannian metric. Let $H\subset G$ be an immersed Lie subgroup. If the normal exponential map $\exp^{\perp}:TH^\perp\to G$ is an immersion then it is a covering map.
\end{lemma}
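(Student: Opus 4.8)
The plan is to transport the problem to the normal bundle by means of the pullback metric and then reduce to the generalized Hopf--Rinow theorem \cref{4thm:Hopf-Rinow}. First I would record two preliminary facts. Since $\eta$ is left-invariant, every left translation $L_g$ is an isometry and the family $\{L_g\}$ acts transitively, so $(G,\eta)$ is a homogeneous Riemannian manifold and hence complete. Secondly, identifying $TH^\perp\cong H\times\mathfrak h^\perp$ by left translation (where $\mathfrak h=T_eH$ and $\mathfrak h^\perp$ is its $\eta_e$-orthogonal complement, so that $T_hH^\perp=(dL_h)_e\mathfrak h^\perp$), the fact that the isometry $L_h$ commutes with the Riemannian exponential yields the explicit formula
\begin{equation*}
  \exp^\perp(h,v)=\exp_h\big((dL_h)_e v\big)=h\cdot\exp_e(v),\qquad h\in H,\ v\in\mathfrak h^\perp .
\end{equation*}
Since $H$ is a connected integral subgroup and $\exp_e(v)$ lies in the identity component, the image is contained in $G_0$, so there is no loss in assuming $G$ connected.

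Because $\dim TH^\perp=\dim G$, the immersion hypothesis makes $\exp^\perp$ a local diffeomorphism; hence $\hat\eta:=(\exp^\perp)^*\eta$ is a genuine Riemannian metric on $TH^\perp$ and $\exp^\perp$ is a local isometry of $(TH^\perp,\hat\eta)$ onto $(G,\eta)$. I would then invoke the standard fact that a local isometry from a complete Riemannian manifold onto a connected one is automatically a covering map (the image is open by the local diffeomorphism property and closed by completeness, hence all of $G$). Thus the whole statement reduces to proving that $(TH^\perp,\hat\eta)$ is complete.

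To prove completeness I would apply \cref{4thm:Hopf-Rinow} inside $(TH^\perp,\hat\eta)$, taking for $S$ the zero section $Z=H\times\{0\}$. The defined-ness of the normal exponential map of $Z$ is the easy half. Along $Z$, the differential of $\exp^\perp$ is the canonical identification, which is a $\hat\eta$-to-$\eta$ isometry by construction; it sends $T_{(h,0)}Z$ to $(dL_h)\mathfrak h=T_hH$ and the fibre $\{h\}\times\mathfrak h^\perp$ to $(dL_h)\mathfrak h^\perp$, the $\eta$-normal space at $h$. As $\mathfrak h\perp_\eta\mathfrak h^\perp$, the fibres are exactly the $\hat\eta$-normal directions to $Z$; and the $\hat\eta$-geodesic normal to $Z$ with initial velocity $(0,w)$ is the ray $t\mapsto(h,tw)$, because its image $t\mapsto h\exp_e(tw)$ is a geodesic of the complete manifold $G$ and is therefore defined for all $t$. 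Hence the normal exponential map of $Z$ is defined on all of $TZ^\perp$.

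The main obstacle is the remaining hypothesis of \cref{4thm:Hopf-Rinow}: that $Z$ be complete as a metric subspace of $(TH^\perp,\hat\eta)$, i.e. with respect to the \emph{outer} distance $d_{\hat\eta}|_Z$ rather than the induced Riemannian one. Here I would use homogeneity: the free smooth action $g\cdot(h,v)=(gh,v)$ of $H$ on $TH^\perp$ satisfies $\exp^\perp\circ(g\cdot)=L_g\circ\exp^\perp$, so it preserves $\hat\eta$ and acts by isometries transitively on $Z$; consequently $(Z,d_{\hat\eta}|_Z)$ is a homogeneous metric space. The delicate point, which uses that $\hat\eta$ metrizes the manifold topology of $TH^\perp$, is that the metric-subspace topology on $Z=H\times\{0\}$ coincides with the manifold (Lie group) topology of $H$ even when $H$ is not closed in $G$; in particular $(Z,d_{\hat\eta}|_Z)$ is locally compact. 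A homogeneous, locally compact metric space has all sufficiently small closed balls compact (transport one compact ball to every point by the transitive isometry group), and such a space is complete, since the tail of any Cauchy sequence lies in a compact ball. Applying \cref{4thm:Hopf-Rinow} now shows that $(TH^\perp,\hat\eta)$ is complete, and the covering property follows.
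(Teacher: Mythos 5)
Your proposal is correct and follows essentially the same route as the paper's own proof: pull back the metric along $\exp^{\perp}$, use the isometric $H$-action to get completeness of the zero section as a metric subspace, verify that the fibre rays are the globally defined normal geodesics of the zero section, invoke \cref{4thm:Hopf-Rinow} to conclude $(TH^\perp,(\exp^{\perp})^*\eta)$ is complete, and finish with the fact that a local isometry out of a complete manifold is a covering. You simply spell out in more detail several steps the paper states tersely (the explicit formula $\exp^\perp(h,v)=h\cdot\exp_e(v)$, local compactness of the zero section, and why homogeneity plus local compactness gives completeness).
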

\begin{proof}
    We denote with $\eta$ the left-invariant Riemannian metric. Since the normal exponential map is an immersion, $(TH^\perp,(\exp^{\perp})^*\eta)$ is a Riemannian manifold. 
    Notice that $H$ acts on $TH^\perp$:
    \begin{equation}\label{eq:H-action}
        \begin{aligned}
            H\times TH^\perp\to TH^\perp,\qquad 
            (h,v)\mapsto dL_{h}v,
        \end{aligned}
    \end{equation}
    where $L_h:G\to G$ denotes the left translation $L_h(g)=hg$.
    The action preserves the metric $(\exp^{\perp})^*\eta$
    \begin{equation}
        dL_h^*(\exp^{\perp})^*\eta=(\exp^\perp\circ\,dL_h)^*\eta=(L_h\circ\exp^\perp)^*\eta=(\exp^{\perp})^* L_h^*\eta=(\exp^{\perp})^*\eta.
    \end{equation}
    The zero section $s_0\subset TH^\perp$, is invariant under the action \eqref{eq:H-action}. Furthermore, $H$ acts transitively on $s_0$. This implies that $s_0\subset TH^\perp$ is complete as a metric subspace of $(TH^\perp,(\exp^{\perp})^*\eta)$, because it is  a locally compact metric space with transitive isometry group.
    Moreover, the normal exponential map of the zero section of $TH^\perp$, which we denote $\exp^{\perp}_0:\left(Ts_0\right)^\perp\to TH^\perp$, is well-defined, the normal geodesics being the 1-dimensional subspaces of the fibres of the vector bundle $TH^\perp$. Theorem \ref{4thm:Hopf-Rinow} implies that $(TH^\perp,(\exp^{\perp})^*\eta)$ is a complete Riemannian manifold. Therefore
    \begin{equation}
        \exp^{\perp}:(TH^\perp,(\exp^{\perp})^*\eta)\to (G,\eta)
    \end{equation}
    is a surjective local isometry of complete Riemannian manifolds and thus, according to \cite[Lem.\ 3.3, Chap.\ 7]{DoCarmo}, a covering map.
\end{proof}
\begin{lemma}\label{lem:orthogonal_groups}
	Let \(G\) be a simply connected Lie group endowed with a left-invariant Riemannian metric. Assume the existence of an immersed codimension-one subgroup \(H\) and a totally geodesic subgroup \(H_3\), orthogonal to each other. Then the map
	\[
	p:H \times H_3 \longrightarrow G, \qquad p(h,h_3)=hh_3
	\]
	is a diffeomorphism.
\end{lemma}

\begin{proof}
	Let \(X\) be a left-invariant vector field spanning \(TH_3\). Since \(H_3\) is totally geodesic and orthogonal to \(H\), the vector field \(X\) is geodesic and orthogonal to \(H\). Therefore, the normal exponential map can be written as
	\[
	\exp^\perp: TH^{\perp} \simeq H \times \mathbb{R} \to G, \qquad \exp^\perp(h,t) = e^{tX}(h).
	\]
	Because \(X\) and \(H\) are transverse, \(\exp^\perp\) is an immersion. By \cref{4cor:groups}, it is also a covering map. Since \(G\) is simply connected, \(\exp^\perp\) is a diffeomorphism. It follows that the map
	\[
	\psi: \mathbb{R} \to H_3, \qquad \psi(t) =\exp^\perp(1_G,t)=e^{tX}(1_G),
	\]
	where $1_G$ denotes the identity element of $G$, is a diffeomorphism as well. Finally, the map \(p: H \times H_3 \to G\) can be expressed as 
	\[
	p(h,h_3)=hh_3=e^{\psi^{-1}(h_3)X}(h)=\exp^{\perp}(h,\psi^{-1}(h_3)),
	\]
	where the second equality follows from left-invariance of $X$ and definition of $\psi$. Since $\exp^{\perp}$ and $\psi$ are diffeomorphisms, we conclude the proof.
\end{proof}

\begin{lemma}\label{lem:geodesics}
    Let $(G,\eta)$ be a Lie group with a left-invariant Riemannian metric $\eta$ and let $X_1,\dots,X_n$ be a left-invariant orthonormal frame with structure constants  $c_{ij}^k$: 
    \begin{equation}
        [X_i,X_j]=\sum_{k=1}^nc_{ji}^kX_k,\qquad i,j=1,\dots,n.
    \end{equation}
    Then, the vector field $X_1$ is geodesic if and only if $c_{1j}^1=0$ for all $j=1,\dots,n$.
\end{lemma}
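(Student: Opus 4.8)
The plan is to translate the statement ``$X_1$ is geodesic'' into the vanishing of the covariant derivative $\nabla_{X_1}X_1$, where $\nabla$ is the Levi-Civita connection of $\eta$, and then to compute this covariant derivative explicitly via the Koszul formula. First I would observe that, since $X_1$ has constant unit length (the frame being orthonormal), its integral curves are unit-speed; consequently they are geodesics precisely when $\nabla_{X_1}X_1=0$. A priori the integral curves are geodesics up to reparametrization iff $\nabla_{X_1}X_1$ is proportional to $X_1$, but the component along $X_1$ is $\eta(\nabla_{X_1}X_1,X_1)=\tfrac12 X_1\,\eta(X_1,X_1)=0$ automatically, so the two conditions coincide.

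Next I would apply the Koszul formula
\[
2\eta(\nabla_X Y,Z)=X\eta(Y,Z)+Y\eta(X,Z)-Z\eta(X,Y)+\eta([X,Y],Z)-\eta([X,Z],Y)-\eta([Y,Z],X)
\]
to the left-invariant fields $X=Y=X_1$ and $Z=X_j$. The key simplification is that for left-invariant vector fields in an orthonormal frame every inner product $\eta(X_a,X_b)$ is constant, so the three derivative terms vanish identically. Using $[X_1,X_1]=0$, the formula collapses to
\[
2\eta(\nabla_{X_1}X_1,X_j)=-2\,\eta([X_1,X_j],X_1).
\]

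Finally I would expand the bracket with the structure constants. With the convention $[X_i,X_j]=\sum_k c_{ji}^k X_k$ one has $[X_1,X_j]=\sum_k c_{j1}^k X_k$, so $\eta([X_1,X_j],X_1)=c_{j1}^1$, and by antisymmetry of the bracket $c_{j1}^1=-c_{1j}^1$. Hence
\[
\eta(\nabla_{X_1}X_1,X_j)=c_{1j}^1\qquad\text{for all } j=1,\dots,n.
\]
Since $X_1,\dots,X_n$ is an orthonormal basis at each point, $\nabla_{X_1}X_1=0$ if and only if all these components vanish, i.e. $c_{1j}^1=0$ for all $j$, which is the claim. The only genuine subtlety, and the step I would double-check most carefully, is the index bookkeeping in the nonstandard convention $c_{ji}^k$ together with the antisymmetry that turns $c_{j1}^1$ into $c_{1j}^1$; everything else is a direct specialization of Koszul's formula.
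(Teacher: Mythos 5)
Your proof is correct and takes essentially the same approach as the paper's: the paper unwinds metric compatibility and torsion-freeness of the Levi-Civita connection by hand to obtain $\eta(\nabla_{X_1}X_1,X_j)=c_{1j}^1$, while you reach the identical identity by specializing the Koszul formula, which is just those two axioms packaged together, and your index bookkeeping (including the sign flip $c_{j1}^1=-c_{1j}^1$) is right. Your preliminary remark on why ``geodesic up to reparametrization'' coincides with $\nabla_{X_1}X_1=0$ for a unit-length field is a point the paper leaves implicit, but it does not change the substance of the argument.
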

\begin{proof}
    Let $\nabla$ denote the Levi-Civita connection. For a fixed $j\in\{1,\dots,n\}$ we compute 
    \begin{equation}
        \eta(\nabla_{X_1}X_1,X_j)=-\eta(X_1,\nabla_{X_1}X_j)=-\eta(X_1,\nabla_{X_j}X_1+[X_1,X_j])=\eta(X_1,[X_j,X_1])=c_{1j}^1.
    \end{equation}
    Therefore $\nabla_{X_1}X_1=0$ if and only if $c_{1j}^1=0$ for all $j=1,\dots,n$.
\end{proof}
\section{A unique factorization property: proof of \cref{thm:group_v_field_intro}}\label{4sec:groups}
In this section we show that any contact group with $\mathfrak g\not\simeq \mathfrak{su}(2) $ satisfies a unique factorization property. We illustrate this fact in \cref{ex:st_sl2}, and, after introducing a convenient basis for $\mathfrak g$ in \cref{lem:can_frame}, we prove \cref{thm:group_v_field_intro}.
\begin{example}\label{ex:st_sl2}
    Let $\widetilde{\mathrm{SL}}(2)$ be the universal cover of $\mathrm{SL}(2)$, the latter being the group of $2\times 2$ real matrices with determinant $1$.  A basis for its Lie algebra $\mathfrak{sl}(2)$ is given by \begin{equation}\label{eq:sl2_basis}
        v_1 = \frac{1}{2}\begin{pmatrix} 1 & 0 \\ 0 & -1 \end{pmatrix}, \qquad
v_2 =\frac{1}{2}\begin{pmatrix} 0 & 1 \\ 1 & 0 \end{pmatrix}, \qquad
v_0=\frac{1}{2}\begin{pmatrix} 0 & -1 \\ 1 & 0 \end{pmatrix}.
    \end{equation}
	Note that such basis of $\mathfrak{sl}(2)$ satisfies
	\begin{equation}
		[v_1,v_0]=-v_2,\qquad 
		[v_2,v_0]=v_1,\qquad
		[v_2,v_1]=v_0.
	\end{equation}
    We define the standard contact structure on $\widetilde{\mathrm{SL}}(2)$ as $\xi=\mathrm{span}\{v_1,v_2\}$, extended to the whole group by left translations. Given $A\in \mathrm{SL}(2)$, seeing its columns as vectors $(x,y)^T\in\mathbb R^2$, there exists a unique $O\in \mathrm{SO}(2)$ mapping the first column of $A$ to a vector belonging to the positive $x$-axes. That is to say, for any $A\in \mathrm{SL}(2)$ there exists a unique $O\in \mathrm{SO}(2)$ such that $OA \in H$ where
    \begin{equation}\label{eq:non_comm_2}
        H=\left\{\begin{pmatrix}
        	a_1 & a_2\\
        	0 & a_3
        \end{pmatrix}\in \mathrm{SL}(2)\,:\,a_i\in\mathbb R,\,a_{1}>0\right\}=\left\{\begin{pmatrix}
             e^a & b \\
             0 & e^{-a}
         \end{pmatrix}\in \mathrm{SL}(2)\,:\,a,b\in\mathbb R\right\}.
    \end{equation}
    Therefore any matrix $A\in \mathrm{SL}(2)$ can be uniquely factorized as $A=O^{-1}B$, with $O\in \mathrm{SO}(2)$ and $B\in H$. Another way to state this unique factorization property is saying that the following map 
\begin{equation}\label{eq:p_sl2}
    \mathrm{SO}(2)\times H\to \mathrm{SL}(2),\qquad (O,B)\mapsto OB,
    \end{equation}
    is a diffeomorphism. Introducing the following subgroups  
    \begin{equation}\label{eq:H_3}
        H_2=\left\{\begin{pmatrix}
             1 & v \\
             0 & 1
         \end{pmatrix}\in \mathrm{SL}(2)\,:\,v\in\mathbb R\right\},\qquad H_3=\left\{\begin{pmatrix}
             e^u & 0 \\
             0 & e^{-u}
         \end{pmatrix}\in \mathrm{SL}(2)\,:\,u\in\mathbb R\right\},
    \end{equation}
   one can check that the map $H_2\times H_3\to H$ defined as $(h_2,h_3)\mapsto h_2h_3$ is a diffeomorphism. We can rewrite the diffeomorphism \eqref{eq:p_sl2} as 
    \begin{equation}
        \mathrm{SO}(2)\times H_2\times H_3\to \mathrm{SL}(2),\qquad (O,h_2,h_3)\mapsto Oh_2h_3.
    \end{equation}
    Passing to the universal covers, denoting with $H_1$ the universal cover of $\mathrm{SO}(2)$, we find the following diffeomorphism
    \begin{equation}\label{eq:real_p_sl2}
        p:H_1\times H_2\times H_3\to \widetilde{\mathrm{SL}}(2),\qquad p(h_1,h_2,h_3)=h_1h_2h_3.
    \end{equation}
    Notice that $TH_3\subset \xi$. Indeed, from \eqref{eq:sl2_basis} and the definition of $H_3$ in \eqref{eq:H_3}, we deduce that  $TH_3=\mathrm{span}\{v_1\}$, while, by definition, $\xi=\mathrm{span}\{v_1,v_2\}$. 
    \end{example}
\begin{lemma}\label{lem:can_frame}
    Let $(G,\xi)$ be a three-dimensional Lie group with a left-invariant contact structure. Let $\mathfrak g$ denote its Lie algebra. Then there exists a basis $\{v_0,v_1,v_2\}$ for $\mathfrak g$ such that $v_1,v_2\in\xi$, and whose Lie brackets satisfy
    \begin{equation}\label{eq:can_frame}
[v_0,v_1]=c_{10}^2v_2,\qquad 
[v_0,v_2]=c_{20}^1v_1,\qquad
[v_1,v_2]=c_{21}^1v_1+c_{21}^2v_2-v_0,
\end{equation}
for some $c_{ij}^k\in \mathbb R$. Furthermore, if $c_{10}^2=c_{20}^1=0$, then we can choose $v_1,v_2\in\xi$ so that 
\begin{equation}\label{eq:heis_cns}
    [v_0,v_1]=[v_0,v_2]=0,\qquad [v_1,v_2]=c_{21}^2v_2-v_0.
\end{equation}
\end{lemma}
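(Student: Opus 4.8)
The plan is to construct the basis in three stages: first single out the transverse direction $v_0$ using the contact form, then diagonalize the adjoint action of $v_0$ on $\xi$ to obtain \eqref{eq:can_frame}, and finally, in the degenerate case, exploit the extra freedom in $\xi$ to obtain \eqref{eq:heis_cns}. First I would fix a functional $\alpha\in\mathfrak g^*$ with $\ker\alpha=\xi$, regarded as a left-invariant $1$-form, so that $d\alpha(X,Y)=-\alpha([X,Y])$ for $X,Y\in\mathfrak g$ and the contact condition is $\alpha\wedge d\alpha\neq 0$. Since $\dim\mathfrak g=3$ and $d\alpha$ has rank $2$, its kernel is a line, and I would let $v_0$ span it. If $\alpha(v_0)=0$ then $\iota_{v_0}(\alpha\wedge d\alpha)=\alpha(v_0)\,d\alpha-\alpha\wedge\iota_{v_0}d\alpha=0$, contradicting that $\alpha\wedge d\alpha$ is a nonzero $3$-form; hence $\alpha(v_0)\neq 0$ and $v_0$ is transverse to $\xi$. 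Because $\iota_{v_0}d\alpha=0$, for every $u\in\xi$ one has $\alpha([v_0,u])=-d\alpha(v_0,u)=0$, so $[v_0,u]\in\ker\alpha=\xi$; thus $\mathrm{ad}_{v_0}$ preserves $\xi$ (here $v_0$ is the Reeb direction).

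The key structural input is that $A:=\mathrm{ad}_{v_0}|_\xi$ is traceless. This follows from $\mathcal L_{v_0}d\alpha=d\iota_{v_0}d\alpha+\iota_{v_0}dd\alpha=0$: writing $[v_0,e_1]=ae_1+be_2$ and $[v_0,e_2]=ce_1+de_2$ for a basis $e_1,e_2$ of $\xi$, the identity $0=(\mathcal L_{v_0}d\alpha)(e_1,e_2)=-d\alpha([v_0,e_1],e_2)-d\alpha(e_1,[v_0,e_2])=-(a+d)\,d\alpha(e_1,e_2)$ together with $d\alpha(e_1,e_2)\neq 0$ gives $\operatorname{tr}A=a+d=0$. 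I would then use the elementary fact that a nonzero traceless real $2\times 2$ matrix is conjugate to one with vanishing diagonal: choose $v_1\in\xi$ with $v_1,Av_1$ linearly independent (possible unless $A=0$), set $v_2:=Av_1$, and apply Cayley--Hamilton, which for trace $0$ reads $A^2=-(\det A)\,\mathrm{Id}$, to get $Av_2=A^2v_1=-(\det A)v_1$. In this basis $[v_0,v_1]=Av_1=v_2$ and $[v_0,v_2]=Av_2=-(\det A)v_1$, which are exactly the first two relations of \eqref{eq:can_frame}; if instead $A=0$ then $[v_0,v_1]=[v_0,v_2]=0$ for every basis of $\xi$, so $c_{10}^2=c_{20}^1=0$. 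Finally, replacing $v_0$ by a suitable multiple (which rescales $A$ but preserves the zero-diagonal basis $v_1,v_2$), I would normalize so that the $v_0$-component of $[v_1,v_2]$ equals $-1$; this is possible because $[v_1,v_2]\notin\xi$ by the contact condition, and it yields the third relation $[v_1,v_2]=c_{21}^1v_1+c_{21}^2v_2-v_0$.

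For the supplementary claim \eqref{eq:heis_cns}, I would note that $c_{10}^2=c_{20}^1=0$ forces $A=\mathrm{ad}_{v_0}|_\xi=0$, so $[v_0,\cdot]$ annihilates $\xi$ for \emph{every} basis of $\xi$, freeing up all the basis freedom inside $\xi$. The $\xi$-component of the bracket is an alternating map $\Lambda^2\xi\to\xi$ from a $1$-dimensional source, so its image is at most a line $\ell\subset\xi$. Choosing $v_2$ to span $\ell$ (the case $\ell=0$ being immediate) and $v_1$ any complementary vector forces $[v_1,v_2]_\xi\in\mathbb R\,v_2$, hence $c_{21}^1=0$; renormalizing $v_0$ as before then gives $[v_1,v_2]=c_{21}^2v_2-v_0$ while keeping $[v_0,v_1]=[v_0,v_2]=0$. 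I expect the main obstacle to be conceptual rather than computational, namely identifying $v_0$ as the kernel direction of $d\alpha$ and proving that $\mathrm{ad}_{v_0}$ both preserves $\xi$ and acts on it with zero trace; everything afterwards reduces to the stated $2\times 2$ linear algebra.
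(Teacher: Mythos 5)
Your proposal is correct and takes essentially the same route as the paper: choose $v_0$ in the Reeb direction, show that $\mathrm{ad}_{v_0}$ preserves $\xi$ and is traceless, anti-diagonalize this traceless operator to get the first two relations of \eqref{eq:can_frame}, rescale $v_0$ to normalize the $v_0$-component of $[v_1,v_2]$ to $-1$, and in the degenerate case align $v_2$ with the image of the $\xi$-component of the bracket to kill $c_{21}^1$. The only differences are presentational: you argue at the Lie-algebra level, derive tracelessness from Cartan's formula $\mathcal{L}_{v_0}d\alpha=0$ rather than from $d^2\theta_0=0$ on the dual coframe, and supply the Cayley--Hamilton detail that the paper leaves implicit.
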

\begin{proof}
    We fix a left-invariant contact form $\alpha$, i.e., a left-invariant differential form such that $\xi=\ker\alpha$. Let $R$ denote the associated Reeb vector field, i.e., the unique vector field satisfying 
    \begin{equation}
        \alpha(R)=1,\qquad d\alpha(R,\cdot)=0.
    \end{equation}
    Let $\{w_0,w_1,w_2\}$ be a left-invariant trivialization of $TG$ satisfying
    \begin{equation}\label{eq:adapp_frame}
        w_0=R,\qquad w_1,w_2\in\xi.
    \end{equation}
    Since the flow of the Reeb vector field preserves the contact structure, we have the endomorphism
    \begin{equation}\label{eq:end_w_0}
        \mathrm{ad}\,w_0:\xi\to\xi,\qquad v\mapsto[w_0,v].
    \end{equation}
    Assume first that $\mathrm{ad}\,w_0$ is identically zero. Then we have constants $a_{21}^1,a_{21}^2,a_{21}^0\in\mathbb R$ such that 
    \begin{equation}
        [w_0,w_1]=[w_0,w_2]=0,\qquad [w_1,w_2]=a_{21}^1w_1+a_{21}^2w_2+a_{21}^0w_0.
    \end{equation}
    If $a_{21}^1=a_{21}^2=0$, then the basis satisfying \eqref{eq:heis_cns} is obtained setting $v_1=w_1,\,v_2=w_2,\,v_0=-a_{21}^0w_0$.
    Otherwise, if for instance $a_{21}^2\neq 0$, then the basis $\{v_0,v_1,v_2\}$ for $\mathfrak g$, with $v_1,v_2\in\xi$, defined by 
    \begin{equation}
     v_1=\frac{1}{a_{21}^2}w_1,\qquad v_2=a_{21}^1w_1+a_{21}^2w_2,\qquad v_0=-a_{21}^0w_0,
    \end{equation}
    satisfies \eqref{eq:heis_cns}. The case $a_{21}^1\neq 0$ is analogous.
    
    Assume that \eqref{eq:end_w_0} is not identically zero. We claim that, nonetheless, $\mathrm{ad}\,w_0$ is traceless. Let $\theta_0,\theta_1,\theta_2$ be a trivialization of $T^*G$ dual to $w_0,w_1,w_2$, i.e., $\theta_i(w_j)=\delta_{ij}$. Being $w_1,w_2$ a basis for $\xi$, which is a contact distribution, the bracket $[v_1,v_2]$ is transverse to $\xi$. One has $\theta_0([w_1,w_2])\neq 0$. Up to re-scaling $w_0$ we may assume the existence of $c_{21}^1,c_{21}^2\in\mathbb R$ such that 
 	\begin{equation}\label{eq:normalized_frame}
 		[w_1,w_2]=c_{21}^1w_1+c_{21}^2w_2-w_0.
 	\end{equation}
 	Since $\theta_i(w_j)=\delta_{ij}$, an application of Cartan formula $d\theta_i(v,w)=v(\theta_i(w))-w(\theta_i(v))-\theta_i([v,w])$ yields
 	\begin{equation}
 		d\theta_k=-\theta_{k}([w_0,w_1])\theta_0\wedge\theta_1-\theta_{k}([w_0,w_2])\theta_0\wedge\theta_2-\theta_{k}([w_1,w_2])\theta_1\wedge\theta_2.
 	\end{equation}
 	Therefore, in light of \eqref{eq:normalized_frame}, for $k=0$ one finds 
 	\begin{equation}
 		d\theta_0=\theta_1\wedge \theta_2,
 	\end{equation}
 	where we have used that $\theta_0([w_0,w_1])=\theta_0([w_0,w_2])=0$, since $\mathrm{ad}\,w_0$ maps $\xi=\mathrm{span}\{w_1,w_2\}$ to itself.
 	From the identity $d^2\theta_0=0$ we deduce
 	\begin{equation}
 		\begin{aligned}
 			0&=d^2\theta_0
 			=d\theta_1\wedge\theta_2-\theta_1\wedge d\theta_2=-(\theta_1([w_0,w_1])+\theta_2([w_0,w_2]))\theta_0\wedge \theta_1\wedge \theta_2\\
 			&=-\mathrm{trace}(\mathrm{ad}\,w_0)\theta_0\wedge \theta_1\wedge \theta_2.
 		\end{aligned}
 	\end{equation}
 	Hence $\mathrm{ad}\,w_0:\xi\to \xi$ is traceless. Therefore, there exists a basis $v_1,v_2$ for $\xi$ and real numbers $c_{10}^2, c_{20}^1$ such that 
 	\begin{equation}
 		[w_0,v_1]=c_{10}^2v_2,\qquad [w_0,v_2]=c_{20}^1v_1.
 	\end{equation} 
 	Setting $v_0=w_0$, the above equation and \eqref{eq:normalized_frame} conclude the proof.

\end{proof}
Before proving \cref{thm:group_v_field_intro}, we introduce some terminology. We call the couple $(G,\xi)$, where $G$ is a Lie group and $\xi$ a left-invariant contact structure, a contact group. We say that two contact groups $(G,\xi)$ and $(G',\xi')$ are isomorphic if there exists a group isomorphism $\varphi:G\to G'$ such that $\varphi_*\xi=\xi'$.
\subsection{Proof of \cref{thm:group_v_field_intro}}\label{sec:proof_group_field}
\begin{proof}
Having already discussed the case of \cref{ex:st_sl2}, we assume throughout the rest of the proof that $(G,\xi)$ is not isomorphic to the contact group described there.

According to \cref{lem:can_frame} there exist a basis of left-invariant vector fields $\{v_0,v_1,v_2\}$, with $v_1,v_2\in\xi$, and real numbers $c_{ij}^k$ such that 
\begin{equation}\label{eq:st_const}
	[v_1,v_0]=c_{01}^2v_2,\qquad 
	[v_2,v_0]=c_{02}^1v_1,\qquad
	[v_1,v_2]=c_{21}^1v_1+c_{21}^2v_2-v_0.
\end{equation}
Let $\{\theta_0,\theta_1,\theta_2\}$ be the trivialization of $T^*G$ dual to $v_0,v_1,v_2$, i.e., $\theta_i(v_j)=\delta_{ij}$. We define a left-invariant Riemannian metric $\eta$ by 
\begin{equation}
	\eta=\theta_0\otimes \theta_0+\theta_1\otimes\theta_1+\theta_2\otimes\theta_2.
\end{equation}
Note that $v_0,v_1,v_2$ is an orthonormal frame. 

\noindent
\textbf{Claim.} There exist a 2-dimensional subalgebra $\mathfrak h\subset \mathfrak g$ and a left-invariant geodesic vector field $X$ tangent to $\xi$, which are orthogonal to each other at the identity. 

\noindent
Since $\theta_i(v_j)=\delta_{ij}$, an application of Cartan formula $d\theta_i(v,w)=v(\theta_i(w))-w(\theta_i(v))-\theta_i([v,w])$ yields
\begin{equation}
	d\theta_0=\theta_1\wedge \theta_2,\qquad d\theta_1=c_{02}^1\theta_0\wedge\theta_2+c_{12}^1\theta_1\wedge\theta_2,\qquad d\theta_2=c_{01}^2\theta_0\wedge\theta_1+c_{12}^2\theta_1\wedge\theta_2.
\end{equation} 
From the identities $d^2\theta_1=d^2\theta_2=0$ we deduce the constraints
    \begin{equation}
        c_{01}^2c_{12}^1=0,\qquad c_{02}^1c_{12}^2=0.
    \end{equation}
    We have the following four possibilities.
    \begin{enumerate}
        \item[$i$.] If $c_{01}^2=c_{12}^2=0$, then the structural equations \eqref{eq:st_const} of $\mathfrak g$ reduces to 
        \begin{equation}
[v_1,v_0]=0,\qquad 
[v_2,v_0]=c_{02}^1v_1,\qquad
[v_1,v_2]=c_{21}^1v_1-v_0.
    \end{equation}
    We set $\mathfrak h=\mathrm{span}\{v_1,v_0\}$, $X=v_2$. Notice that $\mathfrak h$ is a sub-algebra and that $X$ is tangent to $\xi$ and orthogonal to $\mathfrak h$. Moreover, since $c_{2j}^2=0$ for $j=0,1,2$, \cref{lem:geodesics} implies that $X$ is geodesic.
    \item[$ii$.] If $c_{02}^1=c_{12}^1=0$,  then the structural equations \eqref{eq:st_const} of $\mathfrak g$ reduces to 
        \begin{equation}
[v_1,v_0]=c_{01}^2v_2,\qquad 
[v_2,v_0]=0,\qquad
[v_1,v_2]=c_{21}^2v_2-v_0.
    \end{equation}
    We set $\mathfrak h=\mathrm{span}\{v_2,v_0\}$, $X=v_1$. Notice that $\mathfrak h$ is a sub-algebra and that $X$ is tangent to $\xi$ and orthogonal to $\mathfrak h$. Moreover, since $c_{1j}^1=0$ for $j=0,1,2$, \cref{lem:geodesics} implies that $X$ is geodesic.
    \item[$iii$.] If $c_{01}^2=c_{02}^1=0$, according \cref{lem:can_frame}, we can choose $v_0,v_1,v_2$ satisfying \eqref{eq:heis_cns}. We set $\mathfrak{h}=\mathrm{span}\{v_2,v_0\}$, $X=v_1$. Again, $\mathfrak h$ is a sub-algebra, $X$ is tangent to $\xi$ and orthogonal to $\mathfrak h$. Moreover, since $c_{2j}^2=0$ for $j=0,1,2$, \cref{lem:geodesics} implies that $X$ is geodesic.
    \item[$iv$.] If $c_{21}^1=c_{21}^2=0$, and all other structure constants are nonzero, then up to a change of basis for $\xi$ and a re-scaling of $v_0$ we can assume the existence of a constant $\lambda\in\{+1,-1\}$ such that 
    \begin{equation}\label{eq:kappa}
        [v_1,v_0]=\lambda v_2,\qquad 
[v_2,v_0]=-\lambda v_1,\qquad
[v_2,v_1]=v_0.
    \end{equation}
    If $\lambda=1$, we recognize the bracket relations of $\mathfrak {su}(2)$, contradicting the hypothesis of the theorem. Similarly, if $\lambda=-1$, comparing the brackets of \eqref{eq:sl2_basis} with \eqref{eq:kappa}, we deduce that $(G,\xi)$ is isomorphic to the contact group of \cref{ex:st_sl2}, contradicting the assumption made at the beginning of the proof.
 \end{enumerate}   
 The claim is proved. Let $H,H_3$ be the immersed subgroups corresponding to the subalgebras $\mathfrak h$ and $\mathrm{span}\{X_{1_G}\}$ respectively. By construction $H$ is 2-dimensional and $H_3$ is totally geodesic, tangent to $\xi$ and orthogonal to $H$. Applying \cref{lem:orthogonal_groups}, we deduce that the map 
\begin{equation}\label{eq:q_diffeo}
   q:H\times H_3\to G,\qquad q(h,h_3)=hh_3,
\end{equation}
is a diffeomorphism. Because $G$ is simply connected, $H$ is a 2-dimensional simply connected Lie group. There exist exactly two simply connected 2-dimensional Lie groups. One is commutative, the other one is described in equation \eqref{eq:non_comm_2}. Both of them contain two 1-dimensional subgroups $H_1,H_2\subset H$ such that the map 
\begin{equation}\label{eq:H_diffeo}
    H_1\times H_2\to H,\qquad (h_1,h_2)\mapsto h_1 h_2,
\end{equation}
is a diffeomorphism. If $H$ is commutative, then $H$ is isomorphic to $\mathbb R^2$ and this fact is clear. In the non commutative case the two 1-dimensional subgroups of \eqref{eq:non_comm_2} for which \eqref{eq:H_diffeo} holds are exhibited in equation \eqref{eq:H_3}. Finally combining \eqref{eq:H_diffeo} and \eqref{eq:q_diffeo} we define
\begin{equation}
    p:H_1\times H_2\times H_3\to G,\qquad p(h_1,h_2,h_3):=q(h_1h_2,h_3)=h_1 h_2h_3.
\end{equation}
Since both \eqref{eq:H_diffeo} and \eqref{eq:q_diffeo} are diffeomorphisms, we deduce that $p:H_1\times H_2\times H_3\to G$ is a diffeomorphism. 
    \end{proof}
\section{Proof of the main result}\label{sec:proof_of_main}
In this section we prove the main result. We need one preliminary lemma. 
\begin{lemma}\label{lem:rectifiable_tight}
	Let \(\xi\) be a contact structure on \(\mathbb{R}^3\). If there exist global coordinates \((x,y,z)\) such that the vector field \(\partial_z\) is tangent to $\xi$, then \(\xi\) is a tight contact structure.
\end{lemma}

\begin{proof}
	Let \(\alpha\) be a 1-form defining \(\xi\), so that \(\xi = \ker \alpha\). Since \(\partial_z\) is tangent to $\xi$, we have \(\alpha(\partial_z) \equiv 0\), and therefore
	\[
	\alpha = \alpha(\partial_x)\, dx + \alpha(\partial_y)\, dy.
	\]
	Because \(\alpha\) is a contact form, it never vanishes. Hence, there exists a smooth function \(f\) such that
	\[
	\alpha = \sqrt{\alpha(\partial_x)^2 + \alpha(\partial_y)^2} \, (\cos f \, dx + \sin f \, dy).
	\]
	The contact condition \(\alpha \wedge d\alpha \neq 0\) gives
	\[
	\alpha \wedge d\alpha = -(\alpha(\partial_x)^2 + \alpha(\partial_y)^2) (\partial_z f) \, dx \wedge dy \wedge dz \neq 0,
	\]
	so \(\partial_z f \neq 0\) everywhere. This implies that the map
	\[
	\phi: \mathbb{R}^3 \to \mathbb{R}^3, \qquad \phi(x,y,z) = (u,v,w) := (x,y,f(x,y,z))
	\]
	is a diffeomorphism onto its image. By construction,
	\[
	\phi_* \ker \alpha = \ker \{ \cos w \, du + \sin w \, dv \}.
	\]
	The contact structure \(\ker \{ \cos w \, du + \sin w \, dv \}\) is the standard contact structure on \(\mathbb{R}^3\), which is known to be tight \cite{Massot-LectureNotes}. Therefore, \(\xi\) is tight as well.
\end{proof}
\begin{theorem}
	All left-invariant contact structures on three-dimensional Lie groups are tight.
\end{theorem}
\begin{proof}
	Since the property of being overtwisted lifts to the universal cover (see for instance \cite[Remark\, 2.6]{ABBR}), we may assume without loss of generality that \(G\) is simply connected.
	
	Assume $\mathfrak g \simeq \mathfrak{su}(2)$. Since $G$ is simply connected, $G \simeq \mathrm{SU}(2)$. The automorphism group of $\mathfrak{su}(2)$ acts transitively on planes. Therefore, up to group isomorphism, $\mathrm{SU}(2)$ carries a unique left-invariant plane field, which is well known to be a tight contact structure \cite{Massot-LectureNotes}.
	
	Next, consider the case  \(\mathfrak g \not\simeq \mathfrak{su}(2)\), and let
	\[
	p: H_1 \times H_2 \times H_3 \to G
	\]
	be the diffeomorphism provided by Theorem~\ref{thm:group_v_field_intro}. Since \(G\) is simply connected, each subgroup \(H_i\) is diffeomorphic to \(\mathbb{R}\). Choosing generators \(v_i \in T_e H_i\), the maps
	\[
	\mathbb{R} \to H_i, \qquad t \mapsto \exp(t v_i)
	\]
	are diffeomorphisms. It follows that the map
	\[
	p: \mathbb{R}^3 \to G, \qquad p(x,y,z) = \exp(x v_1) \exp(y v_2) \exp(z v_3)
	\]
	is also a diffeomorphism. Observe that the vector field \(\partial_z\) is tangent to $\xi$. Indeed, let \(X\) denote the left-invariant extension of \(v_3\). Since $X$ is left invariant, its flow acts by right translations. In particular
	\[
	p(x,y,z) = e^{zX}(\exp(x v_1) \exp(y v_2)),
	\]
	which implies \(p_* \partial_z = X \in \xi\). Applying Lemma~\ref{lem:rectifiable_tight}, we conclude that \(\xi\) is tight.
\end{proof}
\bibliographystyle{alphaabbr}
	\bibliography{biblio}
\end{document}